%
\documentclass[runningheads]{llncs}

\usepackage[english]{babel}
\usepackage[utf8x]{inputenc}
\usepackage{graphicx}
\usepackage[colorinlistoftodos]{todonotes}
\usepackage{eufrak}
\usepackage{amssymb}
\usepackage{bbm}
\usepackage{tabto}
\usepackage{tikz-cd}
\usepackage{pifont}
\newcommand{\cmark}{\ding{51}}
\newcommand{\xmark}{\ding{55}}
\usepackage[ruled,vlined,linesnumbered]{algorithm2e}

%

\begin{document}
\title{Aggregating Relational Structures}
%
%
\author{Harshit Bisht\inst{1}\orcidID{0000-0003-4785-4989}
\and
Amit Kuber\inst{1}\orcidID{0000-0003-4812-1234}
}
\authorrunning{H. Bisht, A.S. Kuber}
%
\institute{Indian Institute of Technology, Kanpur, U.P. 208016, India}
\maketitle              
\begin{abstract}
We generalize the Arrow's impossibility theorem--a key result in social choice theory--to the setting where the arity $k$ of the relation under consideration is greater than $2$. Some special but natural properties of $k$-ary relations are considered, as well as an analogue for such $k$-ary relations of Endriss and Grandi's result on graph aggregation is proved.

\keywords{Social choice \and Arrow's impossibility theorem \and Aggregation of structures.}
\end{abstract}
\section{Introduction}
With the seminal result in \cite{arrow}, Arrow introduced the problem of aggregating voters' preferences into a collective preference ordering over a set of alternatives (candidates) and showed that such an aggregation must be dictatorial to satisfy seemingly reasonable constraints, laying the foundations for social choice theory. The result has been proved in multiple different settings (\cite{equivalence},\cite{poset}), and generalized to partially ordered preferences and graphs in \cite{poset} and \cite{graph} respectively. As computational tools make it easy to survey and assess data, summarisation and aggregation tools will be required to make inferences or decisions based on the observed relationships such as planning city routes given access to graphs corresponding to each pedestrian's activity. However, not all relationships can be adequately represented in the form of binary relations such as partial orders and graphs. Consider the relationship of ``knowing'' someone on a social network inferred from being tagged in the same picture, a group of three friends in the same photo carries more information than all combinations of pairs and can be best expressed by a 2-simplex. With the motivation of extending the problem to account for such complex relationships, we generalize the impossibility result to aggregating relational structures of any arity satisfying some special properties.

\section{Aggregating $k$-ary Relations}
Fix a non-empty finite set $A$ of candidates. Consider the predicate language $\mathcal{L}_A:=\{R,\{c_a\mid a\in A\}\}$ (for simplicity, we have assumed that $\mathcal L_A$ contains a single relation symbol $R$), where $R$ is a $k$-ary relation symbol with $k\geq 3$ and each $c_a$ is a constant symbol. Let $T_0$ consist of a single $\mathcal L_A$-sentence whose interpretation guarantees that each $\mathcal L_A$-structure has domain (in bijection with) $A$. We will deal with models of some $\mathcal L_A$-theory $T$ extending $T_0$; the extension $T$ will be specified in the due course. When the theory $T$ is fixed, in order to emphasize on the set $A$, we denote the collection of models of the theory $T$ by $\mathcal M(A)$.

A social choice situation over $\mathcal L_A$ consists of the following. Let $\mathcal I$ denote a set of voters/individuals. Each voter chooses $\mathcal{A}_i\in\mathcal M(A)$. An aggregation rule (also known as a social welfare function) is a map $\sigma:\mathcal D\subseteq\mathcal M(A)^{\mathcal{I}}\to\mathcal M(A)$ that satisfies some desirable properties, where $\mathcal D$ is the set of allowed ballots or profiles. By appropriately choosing $T$ and the properties of the aggregation rule, we will prove that the only legitimate choice of the aggregated structure is either a filtered product or an ultraproduct of $\{\mathcal{M}_i\}_{i\in\mathcal{I}}$; the latter case corresponds to a ``dictatorship'' when $\mathcal{I}$ is finite.

\subsection{Properties of $k$-ary relations}
Since Arrow's original result speaks of preference relations, it seemed natural to first attempt to extend it over relations. Owing at the lack of standard definitions of properties for $k$-ary relations, we attempt to provide our own and arrive at the result. We begin by setting up some notations.
\begin{itemize}
\item $(a_1,...,a_m)_{\delta^r}^{+{i_1},...,+{i_n},-{j_1},...,-{j_w}}$ refers to the set of subsequences of $(a_1,...,a_m)$ of length $m-r$, with elements $a_{i_1},...,a_{i_n}$ necessarily present and elements $a_{j_1},...,a_{j_w}$ necessarily absent.
\item $\bar{a}^{-j},b$ refers to the singleton element in $(a_1,...,a_j,b,a_{j+1},...,a_k)_{\delta}^{-j}$.
\end{itemize}

Over a set $A$, a $k$-ary relation $R$ is simply a subset of $A\times...\times A$ (k-times) or $A^k$. Now we define some desirable properties of $k$-ary relations that will make up the extended theory $T$.
\begin{definition}
A $k$-ary relation $R$ is called \textbf{connected} if, for each pairwise-distinct $a_1,...,a_k\in A$, there is a permutation $\tau$ of $\{1,...,k\}$ such that $(a_1,...,a_k)^\tau$ is in $R$.
\end{definition}
\begin{definition}
A $k$-ary relation $R$ is called \textbf{simplicial transitive} if for each $(k+1)$-ary sequence of pairwise-distinct elements $(a_1,...,a_k,a_{k+1})$, we have that  $(a_1,...,a_{k+1})^{+j}_{\delta}\subseteq R$ implies $(a_1,...,a_{k+1})^{-j}_{\delta}\subseteq R$ for each $j\in\{1,...,k+1\}$.
\end{definition}
\begin{definition}
A $k$-ary relation $R$ is called \textbf{path transitive} if for $k$-ary sequences of pairwise-distinct elements $(a_1,...,a_k)$ and $(b_1,...,b_k)$ with $a_i=b_j$ where $i>j$,  we have that $(a_1,...,a_{k})\in R$ and $(b_1,...,b_{k})\in R$ implies that $(a_1,...,a_{i-1},b_{j+1},...,b_{k})_{\delta^{i-j-1}}\subseteq R$.
\end{definition}
\begin{definition}
A $k$-ary relation $R$ is called \textbf{exclusive} if $R[\bar{a}^{\tau}]$ does not hold for all permutations $\tau$ of $1,...,k$ together.
\end{definition}

\subsection{Examples}
Several natural relationships are not adequately captured by binary relations. The following two examples satisfy the above defined properties and present natural aggregation scenarios where our result asserts that a desirable collective relation is impossible to produce.
\subsubsection{Moderate Voters}
Consider a collection of voters and a group of electoral candidates. Each voter interprets the political inclination of each candidate as left or right leaning compared to the others resulting in a total order over the set of candidates. If each voter prefers the moderate candidate in a group of 3 candidates, then this voting behaviour can be captured by a ``betweenness'' relation, with $(a,b,c)\in R_i\leftrightarrow(c,b,a)\in R_i$ representing the $i^{th}$ voters preference for $b$ over $a$ and $c$. Clearly, relations of this nature are both connected and exclusive. This relation is also simplicial transitive since for a sequence $a_1,...,a_4$ of candidates, specifying the betweenness of 3 restricted triples is enough to guarantee $a_1,...,a_4$ as strictly monotonous.
\subsubsection{Seating along a circular table}
Consider a party of dinner guests to be seated on circular table in groups of $4$ (any cyclic arrangement is fine) and preferences over how every subsets of 4 people should be seated. This quartenary relation $R$ on the party of dinner guests is such that if $(a,b,c,d)\in R$ then all cyclic permutations $(b,c,d,a),(c,d,a,b),(d,a,b,c)\in R$ and no other permutation of $\{a,b,c,d\}$ is in $R$. Again, this is both connected and exclusive. Now, consider the same voter's preferences over seating $5$ people on the same table. If he agrees on the cyclic permutation of every collection of 4 people but one, he will also agree to the final restriction of the original $5$-cycle. This is the definition of simplicial transitivity considered above. Thus, preferences on the seating of people around circular tables thus also satisfies the properties defined above.

Our main result implies that both these situations cannot be aggregated with certain desirable properties of the aggregation map.

\subsection{Properties of the aggregation map}
We considered some properties of $k$-ary relations above. Now we state the appropriate $k$-ary generalizations of the properties of the aggregation map that make the proof of the Arrow's impossibility theorem work.
\begin{itemize}
\item[(\textbf{UD})] $\forall a_1,...,a_{k+1}\in A.\forall p\in\mathcal{M}(\{a_1,...,a_{k+1}\})^{\mathcal{I}}.\exists q\in\mathcal{D}.q|_{a_1,...,a_k}=p$
\vspace{8pt}

Normally one expects the aggregation map to be defined on all $\mathcal I$-tuples of $T$-models, but the property \textbf{UD} generalizes the result by requiring the domain $\mathcal{D}$ to only be large enough for the proof to go through.
\vspace{8pt}
\item[(\textbf{P})] $\forall\bar{a}\in A^k.\forall p\in\mathcal{D}.(\forall i\in\mathcal{I}.p_{i}\models R[\bar{a}])\Rightarrow\sigma(p)\models R[\bar{a}]$
\vspace{8pt}

\textbf{P} requires that the result must satisfy the atomic formulas satisfied by each individual voter.
\vspace{8pt}
\item[(\textbf{IIA})] $\forall\bar{a}\in A^k.\forall p,q\in\mathcal{D}.(\forall i\in\mathcal{I}.p_i\models R[\bar{a}]\Leftrightarrow q_i\models R[\bar{a}])\Rightarrow(\sigma(p)\models R[\bar{a}]\Leftrightarrow\sigma(q)\models R[\bar{a}])$
\vspace{8pt}

\textbf{IIA} requires each atomic formula's satisfaction to be independent of other formulas.
\vspace{8pt}
\item[(\textbf{D})] $\exists i\in\mathcal{I}.\forall\bar{a}\in A^k.\forall p\in\mathcal{D}.(p_i\models R[\bar{a}]\Leftrightarrow\sigma(p)\models R[\bar{a}])$
\vspace{8pt}

\textbf{D} posits that the result satisfies the same atomic formulas as one specific voter.
\end{itemize}

\subsection{Arrow's theorem for a single $k$-ary relation}
Now we are ready to state and prove the generalization of Arrow's theorem to the situation for a single $k$-ary relation symbol. We deal with two sets of properties of $k$-ary relations, and prove the theorem in both cases parallelly.
\begin{theorem}
Let $(A,\mathcal{I},\mathcal{D},\sigma)$ be a social choice situation over $k$-ary relation $R$ language $\mathcal{L}$ with $|A|\geq k+1$, satisfying \textbf{UD},\textbf{P}, and \textbf{IIA} where $R$ is (simplicial or path) transitive, exclusive, and connected. Then for finite $\mathcal{I}$, it also satisfies \textbf{D}.
\end{theorem}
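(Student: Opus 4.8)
The plan is to adapt the ultrafilter, or decisive-coalition, proof of Arrow's theorem to the $k$-ary setting. Using \textbf{IIA}, for each $k$-tuple $\bar{a}$ of pairwise-distinct candidates I would define the family of winning coalitions
\[\mathcal{W}_{\bar{a}}:=\{\,G\subseteq\mathcal{I}\mid \sigma(p)\models R[\bar{a}]\ \mbox{for some (by \textbf{IIA} equivalently every) }p\in\mathcal{D}\ \mbox{with }\{i\in\mathcal{I}\mid p_i\models R[\bar{a}]\}=G\,\}.\]
Property \textbf{IIA} makes membership in $\mathcal{W}_{\bar{a}}$ depend only on $G$, and \textbf{UD}, applied to a set of $k+1$ candidates containing the entries of $\bar{a}$ (available since $|A|\geq k+1$), guarantees that a witnessing profile in $\mathcal{D}$ exists for every $G\subseteq\mathcal{I}$. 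The whole theorem then reduces to showing that $\mathcal{W}_{\bar{a}}$ is an ultrafilter on $\mathcal{I}$ that does not depend on $\bar{a}$: a principal ultrafilter on a finite $\mathcal{I}$ is generated by a single voter $i_0$, and unwinding the definitions shows that this $i_0$ is exactly the dictator demanded by \textbf{D}.

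The easy structural facts come first. Property \textbf{P} gives $\mathcal{I}\in\mathcal{W}_{\bar{a}}$ immediately. The ``ultra'' property---that exactly one of $G$ and $\mathcal{I}\setminus G$ is winning---is where \textbf{connected} and \textbf{exclusive} enter, once neutrality (below) is available: connectedness forbids the aggregate from omitting every permutation of $k$ distinct candidates, while exclusivity forbids it from including them all, so the aggregate is forced to decide $R$ on the relevant tuples in a determinate, mutually exclusive way, and neutrality ties the families for the different permutations together. From the ultra property together with $\mathcal{I}\in\mathcal{W}_{\bar{a}}$ one obtains $\emptyset\notin\mathcal{W}_{\bar{a}}$.

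The heart of the argument, and the step I expect to be the main obstacle, is the contagion (field-expansion) lemma: that $\mathcal{W}_{\bar{a}}$ is independent of $\bar{a}$ and that the common family $\mathcal{W}$ is closed under intersection. This is exactly where the (simplicial or path) transitivity of $R$ plays the role that binary transitivity plays in Arrow's original proof, and it cannot be bypassed, since an upward-closed family with the ultra property need not be closed under intersection (witness majority rule); transitivity is precisely what excludes such aggregation rules. Over $k+1$ distinct candidates furnished by \textbf{UD} I would construct a profile in which a known winning coalition $G$ controls the satisfaction of one tuple while \textbf{P} pins down $R$ on the auxiliary tuples, after which transitivity propagates $R$ to a neighbouring tuple, exhibiting $G$ as winning there as well. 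In the simplicial case this propagation moves across the faces of the $k$-simplex on the $k+1$ vertices via the implication $(\,\cdot\,)^{+j}_{\delta}\subseteq R\Rightarrow(\,\cdot\,)^{-j}_{\delta}\subseteq R$; in the path case it chains two overlapping $k$-tuples through their shared coordinate. The same style of construction, now linking two winning coalitions $G_1,G_2$ to a tuple controlled by $G_1\cap G_2$, yields closure under intersection. The delicate point throughout is to check that each profile one writes down is a genuine member of $\mathcal{M}(A)$, i.e.\ that the atomic facts prescribed on the $k+1$ candidates are jointly consistent with transitivity, connectedness, and exclusivity; this consistency verification is the real bookkeeping cost of the proof and must be carried out separately, though in parallel, for the two transitivity notions.

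With neutrality and closure under intersection in hand, $\mathcal{W}$ is an ultrafilter: upward closure follows from intersection closure together with the ultra property, $\emptyset\notin\mathcal{W}$ was noted above, and $\mathcal{I}\in\mathcal{W}$ by \textbf{P}. Since $\mathcal{I}$ is finite, $\mathcal{W}$ is principal, say $\mathcal{W}=\{\,G\mid i_0\in G\,\}$. Then for every tuple $\bar{a}$ and every $p\in\mathcal{D}$ we have $\sigma(p)\models R[\bar{a}]$ iff $\{i\mid p_i\models R[\bar{a}]\}\in\mathcal{W}$ iff $i_0\models R[\bar{a}]$ (that is, $p_{i_0}\models R[\bar{a}]$), which is precisely \textbf{D}.
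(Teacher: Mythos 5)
Your proposal is correct and follows essentially the same route as the paper: per-tuple decisive coalitions made well-defined by \textbf{IIA}, neutrality via transitivity-driven contagion across overlapping tuples (the paper's Propositions 2--3), the ultra/intersection structure extracted from connectedness and exclusivity via \textbf{UD}-posited profiles (the paper's claims F3--F4), and dictatorship from principality of an ultrafilter on finite $\mathcal{I}$. The only divergence is organizational: the paper secures upward closure by proving the monotonicity equivalence $D_U[\bar{a}]\Leftrightarrow E_U[\bar{a}]$ and obtains intersection closure from primality ($U=W\sqcup V$ implies $W\in\mathcal{U}$ or $V\in\mathcal{U}$), whereas you prove intersection closure directly and then derive upward closure from it together with the ultra property---an equivalent decomposition of the same argument.
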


We prove this theorem in a series of propositions along the lines of \cite{abramsky}. First we introduce some notation. For any $U\subseteq\mathcal{I}$, define $U_{\bar{a}}:=\{p\in\mathcal{D}|p_U\models R[\bar{a}]\land p_{U^{c}}\not\models R[\bar{a}]\}$. Define a new $k$-ary relation $D_U$ on $A$ by $D_U[\bar{a}]:=\mathrm{distinct}(\bar{a})\land\forall p\in U_{\bar{a}}.\sigma(p)\models R[\bar{a}]$, where $\mathrm{distinct}(\bar{a})$ is short for $\bigwedge_{1\leq i<j\leq k}a_i\neq a_j$.
\begin{proposition}
For any tuple of pairwise distinct elements $\bar{a}$, $D_U[\bar{a}]\Rightarrow D_U[\bar{a}^{-j},b]$ for each $j\in\{1,...,k\}$.
\end{proposition}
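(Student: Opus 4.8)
The engine of the argument is that the aggregate $\sigma(q)$ lies in $\mathcal{M}(A)$ and therefore inherits the (simplicial or path) transitivity of $R$; the plan is to rig the input profile so that unanimity (\textbf{P}) together with the assumed $D_U[\bar{a}]$ force enough atoms into $\sigma(q)$ that its own transitivity yields $R[\bar{c}]$, where I write $\bar{c}:=\bar{a}^{-j},b$. I assume $b\notin\{a_i\mid i\neq j\}$, as otherwise $\mathrm{distinct}(\bar{c})$ fails and $D_U[\bar{c}]$ is vacuous (the case $b=a_j$ being trivial). Since $|A|\geq k+1$, the elements $a_1,\dots,a_k,b$ form a pairwise-distinct $(k+1)$-set, which is precisely the size on which \textbf{UD} permits me to prescribe a profile freely.

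Before constructing anything I would reduce to a single witness using \textbf{IIA}. Every $p\in U_{\bar{c}}$ exhibits the same satisfaction pattern for the atom $R[\bar{c}]$, namely that exactly the voters of $U$ satisfy it; hence by \textbf{IIA} the value of $\sigma(p)\models R[\bar{c}]$ is constant across $U_{\bar{c}}$. Thus $D_U[\bar{c}]$ follows once I exhibit one profile $q\in\mathcal{D}\cap U_{\bar{c}}$ with $\sigma(q)\models R[\bar{c}]$.

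I would then build this witness on $\{a_1,\dots,a_k,b\}$ and pull it back via \textbf{UD}. Consider the $(k+1)$-tuple $\bar{t}=(a_1,\dots,a_j,b,a_{j+1},\dots,a_k)$: deleting $b$ gives $\bar{a}$ and deleting $a_j$ gives $\bar{c}$. In the simplicial case $\bar{c}=\bar{t}^{-j}_{\delta}$, so by simplicial transitivity of the model $\sigma(q)$ it suffices to force $\bar{t}^{+j}_{\delta}\subseteq\sigma(q)$, i.e.\ every length-$k$ subsequence of $\bar{t}$ retaining $a_j$; one such subsequence is $\bar{a}$ and each of the others contains both $a_j$ and $b$. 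I accordingly prescribe $q$ so that exactly $U$ satisfies $R[\bar{a}]$ (so $D_U[\bar{a}]$ gives $\sigma(q)\models R[\bar{a}]$), exactly $U$ satisfies $R[\bar{c}]$ (placing $q\in U_{\bar{c}}$), and all voters unanimously satisfy $R[\bar{s}]$ for every auxiliary subsequence $\bar{s}\in\bar{t}^{+j}_{\delta}\setminus\{\bar{a}\}$ (so \textbf{P} gives $\sigma(q)\models R[\bar{s}]$); simplicial transitivity of $\sigma(q)$ then yields $\sigma(q)\models R[\bar{c}]$. The path-transitive case proceeds in parallel: I instead pick a second $k$-tuple $\bar{b}$, to be forced into $\sigma(q)$ by unanimity, that shares a single element with $\bar{a}$ in a later-than-earlier position so that the path-transitivity conclusion contains $\bar{c}$, force $R[\bar{a}]$ by decisiveness and the remaining atoms by \textbf{P}, and read off $R[\bar{c}]$ from transitivity of $\sigma(q)$.

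The step I expect to be the main obstacle is the feasibility of the prescribed profile: for \textbf{UD} to apply, each voter's restriction to $\{a_1,\dots,a_k,b\}$ must be a legitimate model of $T$, i.e.\ connected, exclusive and transitive. The delicate point is therefore to exhibit, for the $U$-voters, a valid such model containing $\bar{a}$, $\bar{c}$ and all auxiliary tuples in $R$, and for the $U^{c}$-voters a valid model containing the auxiliary tuples but neither $\bar{a}$ nor $\bar{c}$, all without violating exclusivity (some permutation of each block must be excluded) or connectedness (some permutation must be present). I expect to settle this by writing the models down explicitly, guided by the betweenness and circular-seating examples, and by checking that the $U^{c}$-configuration fails the hypothesis of the transitivity axiom --- so that $\bar{c}$ is genuinely not forced there --- while the $U$-configuration satisfies it.
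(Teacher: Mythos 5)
Your proposal is correct and follows essentially the same route as the paper's own proof: the paper builds exactly your witness profile (exactly $U$ satisfying $R[\bar{a}]$ and $R[\bar{a}^{-j},b]$, all voters unanimously satisfying the auxiliary length-$k$ subsequences of $(a_1,\dots,a_j,b,a_{j+1},\dots,a_k)$ containing both $a_j$ and $b$), gets $\sigma(q)\models R[\bar{a}]$ from $D_U$, the auxiliary atoms from \textbf{P}, the target from simplicial transitivity of $\sigma(q)$, and then extends to every profile in $U_{\bar{a}^{-j},b}$ via \textbf{IIA}, just as in your reduction. In the path-transitive case the paper instantiates your sketch with the concrete auxiliary tuple $(a_2,\dots,a_j,b,a_{j+1},\dots,a_k)$ (handling $j=1$, where this degenerates to the target itself, by instead using $(b,a_1,\dots,a_{k-1})$), and, like you, it leaves the \textbf{UD}-feasibility of the prescribed tables as an unverified appeal to \textbf{UD}.
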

\begin{proof}
If $a_j=b$, there is nothing to prove and we are done.\\
\textbf{If R is simplicial transitive:}\\
For $a_j\neq b$, we construct a profile p with $\bar{a}$, $(\bar{a}^{-j},b)$, and $(a_1,...,a_j,b,...,a_{k})_{\delta}^{j,j+1}$ holding (or not) at $U$ and $U^c$ according to the following table. The p as constructed can be posited to exist due to \textbf{UD}.
\begin{center}
\begin{tabular}{c|c c c}
 & $\bar{a}$ & $\bar{a}^{-j},b$ & $(a_1,...,a_j,b,...,a_{k})_{\delta}^{j,j+1}$ \\
 \hline
 $U$ & \cmark & \cmark & \cmark \\
 $U^c$ & \xmark & \xmark & \cmark \\
\end{tabular}
\end{center}
Clearly, $p\in U_{\bar{a}}\cap U_{\bar{a}^{-j},b}$, along with $p\in \mathcal{I}_{\bar{c}}$ for each $\bar{c}\in(a_1,...,a_j,b,...,a_{k})_{\delta}^{j,j+1}$. Thus, $\sigma(p)\models R[\bar{a}]$ (definition of $D_U$) and $\sigma(p)\models R[\bar{c}]$ for each $\bar{c}\in(a_1,...,a_j,b,...,a_{k})_{\delta}^{j,j+1}$ (\textbf{P}). By transitivity, we can now conclude $\sigma(p)\models R[\bar{a}^{-j},b]$.

Now, for any other profile $q\in U_{\bar{a}^{-j},b}$, we know that $\forall i\in\mathcal{I}.p_i\models R[\bar{a}^{-j},b]\Leftrightarrow q_i\models R[\bar{a}^{-j},b]$, making us conclude $\sigma(q)\models R[\bar{a}^{-j},b]$ (\textbf{IIA}) and thus that $D_U[\bar{a}^{-j},b]$ holds.\\
\textbf{If R is path transitive:}\\
For $a_j\neq b$, we construct a profile p with various combinations of $a_1,...,a_k,b$ holding (or not) at $U$ and $U^c$ according to the following table. The p as constructed can be posited to exist due to \textbf{UD}.
\begin{center}
\begin{tabular}{c|c c c}
 & $\bar{a}$ & $\bar{a}^{-j},b$ & $(a_2,...,a_j,b,...,a_k)$ \\
 \hline
 $U$ & \cmark & \cmark & \cmark \\
 $U^c$ & \xmark & \xmark & \cmark \\
\end{tabular}
\end{center}
Clearly, $p\in U_{\bar{a}}\cap U_{\bar{a}^{-j},b}$, along with $p\in \mathcal{I}_{\bar{c}}$ for $\bar{c}=(a_2,...,a_j,b,...,a_{k})$. Thus, $\sigma(p)\models R[\bar{a}]$ (definition of $D_U$) and $\sigma(p)\models R[\bar{c}]$ for $\bar{c}(a_2,...,a_j,b,...,a_{k})$ (\textbf{P}). By transitivity, we can now conclude $\sigma(p)\models\bar{a}^{-j},b$. Now, for any other profile $q\in U_{\bar{a}^{-j},b}$, we know that $\forall i\in\mathcal{I}.p_i\models R[\bar{a}^{-j},b]\Leftrightarrow q_i\models R[\bar{a}^{-j},b]$, making us conclude $\sigma(q)\models R[\bar{a}^{-j},b]$ (\textbf{IIA}) and thus that $D_U[\bar{a}^{-j},b]$ holds.\\
Note that this proof does not work for $j=1$. For that, the same proof with $(b,a_1,...,a_{k-1})$ (reversing position of $a_j$ and $b$) works.
\end{proof}
\begin{proposition}\label{prop2}
For any k-tuples of pairwise distinct elements $\bar{a}$ and $\bar{b}$, $D_U[\bar{a}]\Rightarrow D_U[\bar{b}]$.
\end{proposition}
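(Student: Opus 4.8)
The plan is to prove this by showing that $D_U$ is invariant under replacing a single coordinate of a distinct tuple by a fresh element, and that any two distinct $k$-tuples are joined by a finite chain of such single-coordinate replacements. Proposition 1 already supplies the invariance, so the substantive work is the combinatorial claim: for arbitrary distinct $\bar{a}=(a_1,\dots,a_k)$ and $\bar{b}=(b_1,\dots,b_k)$, one can exhibit intermediate distinct tuples $\bar{a}=\bar{c}_0,\bar{c}_1,\dots,\bar{c}_n=\bar{b}$ in which consecutive tuples differ in exactly one coordinate. Chaining Proposition 1 along this path then delivers $D_U[\bar{a}]\Rightarrow D_U[\bar{b}]$.

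First I would check that each single-coordinate step is a legitimate instance of Proposition 1. If $\bar{c}_{t+1}$ is obtained from $\bar{c}_t$ by replacing the $j$-th entry with some $b$, then since both tuples have pairwise distinct entries, $b$ cannot coincide with any other entry of $\bar{c}_t$; hence $b$ is fresh relative to $\bar{c}_t$, which is precisely the situation covered by Proposition 1 (its proof applies (simplicial or path) transitivity to a tuple in which $b$ is distinct from $a_1,\dots,a_k$). Consequently $D_U[\bar{c}_t]\Rightarrow D_U[\bar{c}_{t+1}]$ at every step.

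Next I would construct the chain by fixing the target coordinates from left to right. To set the $j$-th coordinate to $b_j$, assuming coordinates $1,\dots,j-1$ already hold $b_1,\dots,b_{j-1}$: if $b_j$ does not occur in the current tuple, replace the $j$-th entry directly; if $b_j$ occurs at some later position $m>j$ (it cannot occur at a position $<j$, since those hold $b_1,\dots,b_{j-1}$ and $\bar{b}$ is distinct), then first move the $m$-th entry to an auxiliary element $s$ and only afterwards place $b_j$ at position $j$. The hypothesis $|A|\geq k+1$ enters exactly here: an element $s$ avoiding all $k$ current entries exists, and since the current value at position $m$ is $b_j$ this $s$ is automatically distinct from $b_j$ too; the constraints forbid at most $k$ elements of $A$, so at least one admissible $s$ remains. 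Every move keeps the tuple pairwise distinct and alters a single coordinate, so each is a valid link, and neither move disturbs the already-fixed positions $1,\dots,j-1$.

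The main obstacle is precisely this collision case: when $\bar{a}$ and $\bar{b}$ share elements --- in the extreme, when $\bar{b}$ is a permutation of $\bar{a}$ --- a naive coordinate-by-coordinate substitution would momentarily repeat an element and thereby fall outside the scope of Proposition 1, whose argument depends on distinctness. Routing the offending entry through a spare element resolves this, and it is exactly here that $|A|\geq k+1$ is indispensable: if $|A|=k$ no spare would exist, the graph of single-coordinate moves on distinct tuples could be disconnected, and the conclusion could genuinely fail.
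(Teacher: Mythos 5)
Your proof is correct and takes essentially the same approach as the paper's: both reduce the claim to a chain of single-coordinate replacements justified by Proposition 1, using a spare element (available because $|A|\geq k+1$) to route entries that would otherwise collide with elements already in the tuple. The paper merely organizes the chain differently --- first substituting the unshared elements, then fixing the residual permutation position by position via three-step swaps through the spare --- whereas you perform one left-to-right pass with inline collision handling; the substance is identical.
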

\begin{proof}
We will work through the proof in two cases.\\
\textbf{Case 1:} Let $\bar{b}$ have the same elements as $\bar{a}$, only in a different permutation. Since $|A|\geq k+1$, we can get hold of $c\in A$ which is distinct from all elements in $\bar{a}$ (or $\bar{b}$). Consider the following inductive procedure:\\
\begin{algorithm}[H]
\DontPrintSemicolon
\SetAlgoLined
Beginning from $D[\bar{a}]$\\
Identify $a_m$ such that $b_1=a_m$\\
Conclude $D_U[\bar{a}^{-m},c]$ (by proposition 2)\\
Conclude $D_U[(\bar{a}^{-m},c)^{-1},b_1]$ (by proposition 2)\\
Conclude $D_U[((\bar{a}^{-m},c)^{-1},b_1)^{-m},a_1]$ (by proposition 2)\\
You have now shown $D_U$ holds for a permutation agreeing with $\bar{a}$ on the first element\\
Repeat similarly for $b_2,b_3,...,b_k$
\caption{Concluding $D_U[\bar{b}]$ from $D_U[\bar{a}]$}
\end{algorithm}
\textbf{Case 2:} Let $\bar{b}$ contain $c_1,...,c_n$ and $\bar{a}$ contain $d_1,...,d_n$ as the elements not shared between them. Conclude that $D_U$ holds at a sequence with each $d_i$ in $\bar{a}$ replaced arbitrarily by $c_i$'s using proposition 2. What is obtained now is a permutation of $\bar{b}$, allowing the conclusion to hold as shown in Case 1.
\end{proof}
The way we defined $D_U[\bar{a}]$ talks only about preference profiles where only the individuals in $U$ support $\bar{a}$. Intuitively, it is clear that profiles where even more individuals support $\bar{a}$ should also guarantee it to hold in the result. This is captured by the relation $$E_U[\bar{a}]:=\mathrm{distinct}(\bar{a})\land\forall p\in\mathcal{D}.(p_U\models R[\bar{a}]\Rightarrow\sigma(p)\models R[\bar{a}])$$.
\begin{proposition}
For any tuple of pairwise distinct elements $\bar{a}$, $D_U[\bar{a}]\Leftrightarrow E_U[\bar{a}].$
\end{proposition}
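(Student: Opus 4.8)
The plan is to prove the two directions $D_U[\bar a]\Leftrightarrow E_U[\bar a]$ separately. The direction $E_U[\bar a]\Rightarrow D_U[\bar a]$ should be essentially immediate from the definitions: $E_U$ quantifies over \emph{all} profiles $p\in\mathcal D$ with $p_U\models R[\bar a]$, whereas $D_U$ quantifies only over the smaller set $U_{\bar a}$ of profiles in which $U$ supports $\bar a$ and $U^c$ does not. Since $U_{\bar a}\subseteq\{p\in\mathcal D\mid p_U\models R[\bar a]\}$, the conclusion of $E_U$ on a larger index set immediately yields the conclusion of $D_U$. Thus the whole content of the proposition is in the forward direction $D_U[\bar a]\Rightarrow E_U[\bar a]$.

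For the forward direction I would fix a tuple $\bar a$ of pairwise distinct elements with $D_U[\bar a]$, and take an arbitrary profile $p\in\mathcal D$ satisfying $p_U\models R[\bar a]$; I must show $\sigma(p)\models R[\bar a]$. The difficulty is that $U^c$ may also support $\bar a$ in $p$, so $p$ need not lie in $U_{\bar a}$ and $D_U[\bar a]$ does not apply directly. The natural move is to compare $p$ with an auxiliary profile $q$ that agrees with $p$ on the satisfaction of $R[\bar a]$ across \emph{all} voters in $U$ but in which $U^c$ fails $R[\bar a]$; for such a $q$ we would have $q\in U_{\bar a}$, hence $\sigma(q)\models R[\bar a]$ by $D_U[\bar a]$. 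However, since $p$ already may have some voters in $U^c$ supporting $\bar a$, the honest approach is to split $U^c$ into the set $S$ of voters in $U^c$ who support $R[\bar a]$ in $p$ and those who do not, and observe that $p\in (U\cup S)_{\bar a}$. So what I actually need is $D_{U\cup S}[\bar a]$, and this is where Proposition~\ref{prop2} (together with Proposition~1) becomes essential: those results transfer $D_U$ across tuples, but to move from $U$ to a larger index set $U\cup S$ one needs monotonicity of the dictating sets, which must be extracted from the transitivity/exclusivity/connectedness hypotheses in the same style as the earlier propositions.

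Concretely, the key step I expect to carry the weight is establishing an \emph{upward closure} lemma of the form $D_U[\bar a]\Rightarrow D_V[\bar a]$ whenever $U\subseteq V$, proved by constructing, via \textbf{UD}, a profile in which $U$ supports $\bar a$, $V\setminus U$ supports a suitable ``connecting'' tuple forced by connectedness/exclusivity, and $V^c$ supports nothing relevant, then invoking \textbf{P} and transitivity exactly as in Proposition~1, and finally transferring to the given profile by \textbf{IIA}. Once such monotonicity is in hand, the argument closes: given $p$ with $p_U\models R[\bar a]$, let $V=\{i\in\mathcal I\mid p_i\models R[\bar a]\}\supseteq U$; then $p\in V_{\bar a}$, and $D_U[\bar a]$ upgrades to $D_V[\bar a]$, whence $\sigma(p)\models R[\bar a]$, establishing $E_U[\bar a]$.

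The main obstacle will be handling the voters in $U^c$ who happen to support $\bar a$: the definition of $D_U$ rigidly demands $U^c$ \emph{not} support $\bar a$, so one cannot naively apply $D_U[\bar a]$ to $p$. Threading the monotonicity argument through the same profile-construction and \textbf{IIA} machinery as Propositions~1 and~\ref{prop2}, while respecting the distinctness and exclusivity constraints that permit the auxiliary tuples to exist, is the delicate part; everything else is a direct unwinding of the definitions.
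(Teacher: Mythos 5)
Your overall strategy is sound and, in essence, it is the paper's own argument re-packaged rather than a different route. Two remarks temper this. First, your ``upward closure lemma'' ($D_U[\bar a]\Rightarrow D_V[\bar a]$ for all $V\supseteq U$) is not an auxiliary tool that reduces the proposition to something simpler: it is logically \emph{equivalent} to $D_U[\bar a]\Rightarrow E_U[\bar a]$ (given $V\supseteq U$ and $p\in V_{\bar a}$ one has $p_U\models R[\bar a]$, and conversely any $p$ with $p_U\models R[\bar a]$ lies in $V_{\bar a}$ for $V=\{i\mid p_i\models R[\bar a]\}$). So all the work of the proposition still sits inside that lemma, and the paper proves exactly this content inline: it builds, via \textbf{UD}, an auxiliary profile $q$ in which the support for $\bar a$ ``mimics $p$'' --- i.e.\ is exactly your $V=U\cup S$ --- and transfers to $p$ by \textbf{IIA}.

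Second, the construction you sketch for the lemma would fail as literally described. You propose a profile in which ``$U$ supports $\bar a$, $V\setminus U$ supports a suitable connecting tuple, and $V^c$ supports nothing relevant.'' If only $U$ supports $\bar a$ in $q$, then $q\in U_{\bar a}$, and \textbf{IIA} cannot carry the conclusion over to $p\in V_{\bar a}$, since IIA only relates profiles agreeing voter-by-voter on $R[\bar a]$. Moreover, the connecting tuples must be supported by \emph{exactly} $U$, not by $V\setminus U$: the only leverage $D_U$ gives (via Propositions 1 and 2) is over profiles lying in $U_{\bar c}$. The working construction (simplicial case; the path-transitive case is analogous) is: $\bar a$ supported exactly by $V$; each $\bar c\in(a_1,b,a_2,\ldots,a_k)_\delta^{1,2}$ supported exactly by $U$, so that Proposition 2 yields $D_U[\bar c]$ and hence $\sigma(q)\models R[\bar c]$; and $(b,a_2,\ldots,a_k)$ supported by all of $\mathcal I$, so that \textbf{P} yields $\sigma(q)\models R[b,a_2,\ldots,a_k]$. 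Simplicial transitivity applied to the $(k+1)$-sequence $(a_1,b,a_2,\ldots,a_k)$ then forces $\sigma(q)\models R[\bar a]$, and \textbf{IIA} propagates this to every profile in $V_{\bar a}$, giving $D_V[\bar a]$. Note also that this step uses only transitivity together with \textbf{UD}, \textbf{P}, \textbf{IIA}; the connectedness and exclusivity you invoke play no role here --- they enter only later, in the ultrafilter claim.
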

\begin{proof}
$E_U[\bar{a}]\Rightarrow D_U[\bar{a}]$ is clear from the definitions. We will thus focus on the proof of $D_U[\bar{a}]\Rightarrow E_U[\bar{a}]$.\\
\textbf{If R is simplicial transitive:}\\
Consider a profile $p$, not necessarily in $U_{\bar{a}}$ such that $p_U\models R[\bar{a}]$. We construct a profile $q$ with $\bar{a}$, $(b,a_2,a_3,...,a_k)$, and $(a_1,b,a_2,...,a_{k})_{\delta}^{1,2}$ holding (or not) at $U$ and $U^c$ according to the following table. The q as constructed can be posited to exist due to \textbf{UD}.
\begin{center}
\begin{tabular}{c|c c c}
 & $\bar{a}$ & $b,a_2,...,a_k$ & $(a_1,b,a_2,...,a_{k})_{\delta}^{1,2}$ \\
 \hline
 $U$ & \cmark & \cmark & \cmark \\
 $U^c$ & mimics p & \cmark & \xmark \\
\end{tabular}
\end{center}
Clearly, $q\in U_{\bar{c}}$ for each $\bar{c}\in(a_1,b,a_2,...,a_{k})_{\delta}^{1,2}$, along with $q\in \mathcal{I}_{b,a_2,...,a_k}$. Thus, $\sigma(q)\models R[b,a_2,...,a_k]$ (\textbf{P}) and $\sigma(q)\models R[\bar{c}]$ for each $\bar{c}\in(a_1,b,a_2,...,a_{k})_{\delta}^{1,2}$ ($D_U$ holds for each such $\bar{c}$). By transitivity, we can now conclude $\sigma(q)\models R[\bar{a}]$. It is now easy to see that $\forall i\in\mathcal{I}.p_i\models R[\bar{a}]\Leftrightarrow q_i\models R[\bar{a}]$, making us conclude $\sigma(p)\models R[\bar{a}]$ (\textbf{IIA}). Since the initial choice of $p$ was arbitrary, we can conclude $E_U[\bar{a}]$ holds.\\
\textbf{If R is path transitive:}\\
Consider a profile $p$, not necessarily in $U_{\bar{a}}$ such that $p_U\models R[\bar{a}]$. We construct a profile $q$ with $\bar{a}$, $\bar{a}^{-(k-1)},b$, and $\bar{a}^{-k},b$ holding (or not) at $U$ and $U^c$ according to the following table. The q as constructed can be posited to exist due to \textbf{UD}.
\begin{center}
\begin{tabular}{c|c c c}
 & $\bar{a}$ & $(a_1,...,a_{k-2},b,a_k)$ & $(a_1,a_2,...,a_{k-1},b)$ \\
 \hline
 $U$ & \cmark & \cmark & \cmark \\
 $U^c$ & mimics p & \xmark & \cmark \\
\end{tabular}
\end{center}
Again, $q\in U_{\bar{c}}$ for $\bar{c}=(a_1,...,a_{k-2},b,a_{k})$, along with $q\in \mathcal{I}_{a_1,...,a_{k-1},b}$. Thus, $\sigma(q)\models R[a_1,...,a_{k-1},b]$ (\textbf{P}) and $\sigma(q)\models R[\bar{c}]$ for $\bar{c}=(a_1,...,a_{k-2},b,a_{k})$ ($D_U$). By transitivity, we can now conclude $\sigma(q)\models R[\bar{a}]$. It is now easy to see that $\forall i\in\mathcal{I}.p_i\models R[\bar{a}]\Leftrightarrow q_i\models R[\bar{a}]$, making us conclude $\sigma(p)\models R[\bar{a}]$ (\textbf{IIA}). Since the initial choice of $p$ was arbitrary, we can conclude $E_U[\bar{a}]$ holds.
\end{proof}
Now, consider the following collection of voter coalitions $\mathcal{U}=\{U\subseteq\mathcal{I}\mid\exists\bar{a}\in A.D_U[\bar{a}]\}$. We wish to claim that this collection is an ultrafilter, which will allow us to find a "dictator" in the social choice situation, as described above.
\begin{claim}
$\mathcal{U}$ as defined above is an Ultrafilter.
\end{claim}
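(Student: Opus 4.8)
The plan is to verify the four defining properties of an ultrafilter on $\mathcal{I}$ — properness ($\emptyset\notin\mathcal{U}$ while $\mathcal{I}\in\mathcal{U}$), upward closure, closure under finite intersection, and maximality (for every $U$, either $U\in\mathcal{U}$ or $U^c\in\mathcal{U}$) — using the three preceding propositions as black boxes. Two facts make the bookkeeping manageable: by Proposition \ref{prop2} the predicate $D_U$ is independent of the chosen tuple, so ``$U\in\mathcal{U}$'' is equivalent to ``$D_U[\bar a]$ holds for every tuple $\bar a$ of pairwise-distinct elements'', and by the equivalence $D_U[\bar a]\Leftrightarrow E_U[\bar a]$ we may freely replace the exact-support condition defining $D_U$ by the monotone condition $E_U$ whenever convenient.

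The three easy axioms I would dispatch first. For $\mathcal{I}\in\mathcal{U}$, any $p\in\mathcal{I}_{\bar a}$ has all voters supporting $R[\bar a]$, so \textbf{P} gives $\sigma(p)\models R[\bar a]$ and hence $D_{\mathcal{I}}[\bar a]$. For upward closure, if $U\subseteq V$ and $E_U[\bar a]$ holds, then $p_V\models R[\bar a]$ forces $p_U\models R[\bar a]$ and therefore $\sigma(p)\models R[\bar a]$, so $E_V[\bar a]$ holds; passing back to $D$ gives $V\in\mathcal U$. For $\emptyset\notin\mathcal U$, note that $E_\emptyset[\bar a]$ would make the hypothesis $p_\emptyset\models R[\bar a]$ vacuously true, forcing $\sigma(p)\models R[\bar a]$ for every profile and hence, via Proposition \ref{prop2}, for every permutation of a fixed set of $k$ pairwise-distinct elements; this contradicts the exclusivity of the model $\sigma(p)$.

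The substantive step is closure under intersection, and this is where I expect the real difficulty. Given $U,V\in\mathcal U$, I would exploit $|A|\geq k+1$ to fix pairwise-distinct $a_1,\dots,a_{k+1}$ and think of the $k+1$ length-$k$ subtuples as the faces of a $k$-simplex. The aim is to build (by \textbf{UD}) a single profile, via a checkmark/cross table in the style of the tables above, in which: all voters of $U$ support one face through a chosen vertex $a_j$, all voters of $V$ support a second face through $a_j$, every voter supports the remaining $k-2$ faces through $a_j$, and the set of voters supporting the opposite face $f_j$ (the one omitting $a_j$) is exactly $U\cap V$. Decisiveness of $U$ and $V$ (through $E_U,E_V$) together with \textbf{P} then force $\sigma$ to satisfy all $k$ faces through $a_j$, so simplicial transitivity forces $\sigma\models R[f_j]$; since the supporters of $f_j$ are exactly $U\cap V$ and the representative profile is immaterial by \textbf{IIA}, we obtain $D_{U\cap V}[f_j]$, i.e. $U\cap V\in\mathcal U$. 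When $R$ is path transitive the simplicial step is replaced by the corresponding chaining of two $k$-tuples sharing a displaced coordinate. The hard part is precisely the existence of individual connected–exclusive–transitive models realizing this prescribed pattern of face-supports simultaneously: for $k\geq 3$ there are $k-2$ faces that must hold unanimously in addition to the two ``decisive'' faces, a constraint absent from the classical binary argument, and one must check that the relevant transitivity hypothesis genuinely applies to the constructed profile.

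Finally, maximality I would obtain by the same circle of ideas in the guise of a contraction principle: applying the intersection-style construction to the partition $\mathcal{I}=U\sqcup U^c$ of the decisive set $\mathcal{I}\in\mathcal U$ shows that at least one of $U,U^c$ must be decisive, with connectedness of $\sigma(p)$ guaranteeing that some face is forced. That $U$ and $U^c$ cannot both lie in $\mathcal U$ is then immediate from properness and the intersection closure already established, since otherwise $\emptyset=U\cap U^c\in\mathcal U$. Together these verifications show $\mathcal U$ is an ultrafilter.
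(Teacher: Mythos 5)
Your proposal is correct in outline, but it follows a genuinely different decomposition from the paper's. The paper, following Abramsky's axiomatization, verifies four properties: (F1) $\mathcal{I}\in\mathcal{U}$ via \textbf{P}; (F2) upward closure via $E_U\subseteq E_V$; (F3) any two members of $\mathcal{U}$ intersect, proved by building a profile in which two disjoint decisive coalitions would force \emph{all} permutations of $\bar{a}$ into $\sigma(p)$, contradicting exclusivity; and (F4) primeness: if $U=W\sqcup V\in\mathcal{U}$ then $W\in\mathcal{U}$ or $V\in\mathcal{U}$, proved by a profile in which connectedness forces some permutation of $\bar{a}$ to hold, followed by a case analysis using transitivity. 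Closure under finite intersection is never proved directly in the paper; it follows formally from F1--F4. You instead verify the textbook axioms (properness, upward closure, intersection closure, maximality), which makes intersection closure your substantive step: a simplex construction exploiting decisiveness of \emph{both} $U$ and $V$ in a single profile (two decisive faces through a vertex, $k-2$ unanimous faces, and the opposite face supported exactly by $U\cap V$), with transitivity applied exactly once and \textbf{IIA} converting the single profile into $D_{U\cap V}[f_j]$. Your properness argument (via $E_\emptyset$, Proposition~\ref{prop2}, and exclusivity) is a cleaner, proposition-driven version of the paper's F3. What each route buys: yours has a more familiar filter axiomatization and a symmetric, one-shot use of transitivity; the paper's primeness argument never needs two decisive coalitions in one profile, at the cost of the longer case split in F4.

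Two caveats. First, your maximality step as literally stated does not go through: ``applying the intersection-style construction to $\mathcal{I}=U\sqcup U^c$'' is not possible, because that construction presupposes that both parts of the partition are already decisive, which is exactly what is to be shown. What is needed is the connectedness-based case split (the paper's F4 specialized to $W=U$, $V=U^c$): build a profile where $U$ supports exactly $\bar{a}$ and $U^c$ exactly some other permutation(s); connectedness forces some permutation to hold in $\sigma(p)$; if it is one supported by $U$ (resp.\ $U^c$) then \textbf{IIA} gives $U\in\mathcal{U}$ (resp.\ $U^c\in\mathcal{U}$), and if it is one supported by nobody, your properness argument yields $\emptyset\in\mathcal{U}$, a contradiction. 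This is easily repaired but is a different mechanism from the one you invoke. Second, the realizability issue you flag---whether individual connected, exclusive, transitive models exhibiting the prescribed pattern of face-supports exist, since \textbf{UD} only extends genuine $T$-models on $k+1$ points---is a real burden, but it is one the paper's own tables carry equally and silently (each profile is merely ``posited to exist due to \textbf{UD}''), so on this point your sketch is at the same level of rigor as the paper's proof.
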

\begin{proof}
(\textbf{F1}: $\mathcal I\in\mathcal U$) Because of \textbf{P}, it is easy to note that $\mathcal{I}\in\mathcal{U}$.\\ \\
(\textbf{F2}: $\mathcal U$ is an upper set) Consider $U\in\mathcal{U}$ and $U\subseteq V$. Clearly, that gives us $E_U\subseteq E_V$, which allows us to conclude $V\in\mathcal{U}$ using Proposition 3.\\ \\
(\textbf{F3}: Any two elements of $\mathcal U$ intersect) Consider $U,V\in\mathcal{U}$ such that $U\cap V=\phi$. Construct a preference profile $p$ with all possible permutations of $\bar{a}$ holding (or not) according to the following table. The $p$ as constructed can be posited to exist due to \textbf{UD}
\begin{center}
\begin{tabular}{c|c c}
 & $\bar{a}$ & $\bar{a}^{\tau(\neq id)}$\\
 \hline
 $U$ & \cmark & \xmark \\
 $V$ & \xmark & \cmark \\
 $U^c\cap V^c$ & \xmark & \xmark \\
\end{tabular}
\end{center}
This would imply that $\sigma(p)\models R[\bar{a}^{\tau}]$ for each permutation $\tau$ (Since $U,V\in\mathcal{U}$), contradicting the \textbf{exclusivity} of $R$. Thus, such $U,V$ cannot belong in $\mathcal{U}$ together.\\ \\
(\textbf{F4}: $\mathcal U$ is a prime filter) Consider $U\in\mathcal{U}$ such that $U=W\sqcup V$. We must show one of $W$ or $V$ also belong in $\mathcal{U}$.\\
\textbf{If R is simplicial transitive:}\\
Construct a preference profile $p$ with various permutations of $\bar{a}$ and $a_1,...,a_k,b$ holding according to the following table:
\begin{center}
\begin{tabular}{c|c c c c c}
 & $\bar{a}$ & $\bar{a}^{\tau\neq id}$ & $(b,a_1,a_2,...,a_k)^{1,2}_{\delta}$ & $(b,a_1,a_2,...,a_k)_{\delta}$ & $(a_1,a_2,...,a_k,b)_{\delta}$\\
 \hline
 $V$ & \xmark & \cmark & \cmark & \xmark & \xmark\\
 $W$ & \cmark & \xmark & \xmark & \cmark & \xmark\\
 $U^c$ & \cmark & \xmark & \xmark & \xmark & \cmark\\
\end{tabular}
\end{center}
Since $R$ is connected, some permutation of $\bar{a}$ must hold at $\sigma(p)$. We evaluate two possible cases separately.\\
\textbf{Case 1:} Let $\sigma(p)\models R[\bar{a}^{\tau}]$ for some $\tau\neq id$. Clearly here, since $p\in V_{\bar{a}^{\tau}}$, we can conclude that $V\in\mathcal{U}$.\\ \\
\textbf{Case 2:} $\sigma(p)\models R[\bar{a}]$. Clearly, $p\in U_{\bar{c}}$ for each $\bar{c}\in(b,a_1,...,a_k)^{1,2}_{\delta}$ with $U\in\mathcal{U}$. Therefore, $\sigma(p)\models R[\bar{c}]$ for each $\bar{c}\in(b,a_1,...,a_k)^{1,2}_{\delta}$. By transitivity, we can conclude $\sigma(p)\models R[b,a_2,...,a_k]$. Since $p$ also belongs to $W_{b,a_2,...,a_k}$, we can conclude $W\in\mathcal{U}$.\\ \\
\textbf{If R is path transitive:}\\
Construct a preference profile $p$ with various combinations of $a_1,...,a_k,b$ holding according to the following table:
\begin{center}
\begin{tabular}{c|c c c c c}
 & $\bar{a}$ & $\bar{a}^{\tau\neq id}$ & $(a_2,...,a_k,b)$ & $(b,a_2,...,a_k)$ & $(a_1,...,a_{k-1},b)$\\
 \hline
 $V$ & \xmark & \cmark & \cmark & \xmark & \xmark\\
 $W$ & \cmark & \xmark & \cmark & \xmark & \cmark\\
 $U^c$ & \cmark & \xmark & \xmark & \cmark & \xmark\\
\end{tabular}
\end{center}
Since $R$ is connected, some permutation of $\bar{a}$ must hold at $\sigma(p)$. We evaluate two possible cases separately.\\
\textbf{Case 1:} Let $\sigma(p)\models R[\bar{a}^{\tau}]$ for some $\tau\neq id$. Clearly here, since $p\in V_{\bar{a}^{\tau}}$, we can conclude that $V\in\mathcal{U}$.\\ \\
\textbf{Case 2:} $\sigma(p)\models R[\bar{a}]$. Clearly, $p\in U_{\bar{c}}$ for $\bar{c}=(a_2,...,a_k,b)$ with $U\in\mathcal{U}$. Therefore, $\sigma(p)\models R[a_2,...,a_k,b]$. By transitivity, we can conclude $\sigma(p)\models R[a_1,...,a_{k-1},b]$. Since $p$ also belongs to $W_{a_1,...,a_{k-1},b}$, we can conclude $W\in\mathcal{U}$.
\end{proof}
This leads us to conclude that while aggregating $\mathcal{L}$-structures with single $k$-ary relation symbol $R$, \textbf{UD}, \textbf{P}, \textbf{IIA}, along with connectedness, exclusivity, and transitivity (any definition) of $R$ are sufficient conditions to conclude $\mathcal{U}$ is an ultrafilter. This is equivalent to the existence of a dictator while aggregating $k$-ary relations.

\section{Metaproperties for $k$-ary relations}
The above results have been established only for the properties we defined, restricting their usefulness. Borrowing the ideas put forward in \cite{graph}, we define metaproperties that collect a class of properties and show that an impossibility result follows for each of them.
\subsection{Defining metaproperties}
On a fixed set $V$, we will talk about $k$-ary relations $R\subseteq V^{k}$. Since such relations can be aptly described as uniform directed $k$-hypergraphs, we interchangeably call them $U_k$-graphs, for short. Any property $P$ of $k$-ary relations can be identified with the collection of all relations satisfying the property, i.e., a subset $P$ of $\mathcal{P}(V^k)$.

The social choice situation has $\mathcal{N}$ as the set of individuals, with each $i\in\mathcal{N}$ contributing a relation $R_{i}\subseteq V^k$ over a fixed set $V$. The collection of each voter's preference (a preference profile) $(R_{i})_{i\in\mathcal{N}}$ will be denoted by $\textbf{R}$. An aggregation rule $F:\mathcal{P}(V^k)^{\mathcal{N}}\rightarrow\mathcal{P}(V^k)$ takes a preference profile $\textbf{R}$ as input and outputs a collective relation $F(\textbf{R})$. In profile $\textbf{R}$, we will use $N_{\bar{a}}^{\textbf{R}}=\{i\in\mathcal{N}\mid \bar{a}\in R_i\}$ to denote the collection of voters in $\mathcal{N}$ supporting tuple $\bar{a}$ in $\textbf{R}$.
\begin{definition}
The \textbf{dictatorship} of an individual $i^*$ is the aggregation rule $F_{i^*}$ such that for each profile $\textbf{R}$, $F_{i^*}(\textbf{R})=R_{i^*}$.
\end{definition}
\begin{definition}
The \textbf{oligarchy} of a nonempty coalition $C^*$ is the aggregation rule $F_{C^*}$ such that for each profile $\textbf{R}$, $F_{i^*}(\textbf{R})=\bigcap_{i\in C^*}R_{i}$.
\end{definition}
We now restate the properties of aggregation rules with slight modifications to make it easier to work with them. \textbf{Unanimity} means that the aggregated $U_k$-graph will contain all tuples included in every voter's $U_k$-graph. \textbf{Groundedness} can be seen as unanimity with respect to abstinence/silence, stating that the result must contain a tuple only if at least one voter proposes it. Analogous to IIA, \textbf{independence of irrelevant edges} (IIE) requires that $F$ pays no attention to the other tuples of the $U_k$-graphs while making a decision about the inclusion about a particular tuple. Finally, an aggregation rule being \textbf{collectively rational} with respect to a property $P$ implies that its output satisfies $P$ whenever each relation in the profile does. This is especially useful when the result of the election must be in the same form as the votes (when aggregating total orders, for example). The preservation of properties preserve the relational structure of the inputs.

We wish to define suitable metaproperties that will both allow the proofs to go through and be general enough to be satisfied by a large class of relations. We wish for the properties to spread from a tuple in the $U_k$-graph to its ``neighbouring'' tuples (contagious), force the inclusion of certain tuples when some other tuples are present (implicative), and force atleast some tuples to exist so we forbid the empty relation on any induced $U_k$-subgraph (disjunctive). To make stating the results of this section easy, we will denote by $P[S^{+},S^{-}]$ the collection of all relations satisfying property $P$, containing all tuples in $S^{+}$ and none in $S^{-}$ for some disjoint $S^{+},S^{-}\subseteq V^k$.
\begin{definition}
Let $\bar{a},\bar{b}\in V^k$. A $U_k$-graph property $P$ is called $\bar{a}/\bar{b}$ contagious if there exist two disjoint sets $S^+,S^-\subseteq V^k$ such that:
\begin{enumerate}
\item For every $R\in P[S^+,S^-]$, $\bar{a}\in R$ implies $\bar{b}\in R$.
\item There exist $R_0,R_1\in P[S^+,S^-]$ with $\bar{a}\in R_1$ and $\bar{b}\notin R_0$.
\end{enumerate}
\end{definition}
\begin{definition}
A property $P$ is called \textbf{contagious} if it satisfies either of the conditions below:
\begin{enumerate}
\item For some j, $P$ is $\bar{a}/\bar{c}$ contagious for all distinct elements $a_1,...,a_k,b\in V$ for all $\bar{c}\in(a_1,...,a_{j-1},b,...,a_k)_{\delta}^{+j}$.
\item $P$ is $\bar{a}/\bar{c}$ contagious for all distinct elements $a_1,...,a_k,b\in V$ for all $j$ where $\bar{c}=\bar{a}^{-j},b$.
\end{enumerate}
\end{definition}

\begin{definition}
A property $P$ is called \textbf{implicative} if there exist two disjoint sets $S^+,S^-\subseteq V^k$ and three pairwise distinct tuples $\bar{a}_1,\bar{a}_2,\bar{a}_3\in V^k\backslash(S^+\cup S^-)$ such that:
\begin{enumerate}
\item For every relation $R\in P[S^+,S^-]$, $\bar{a}_1,\bar{a}_2\in R$ implies $\bar{a}_3\in R$.
\item There exist $U_k$-graphs $R_0,R_1,R_2,R_{13},R_{123}\in P[S^+,S^-]$ which satisfy\\ $R_0\cap\{\bar{a}_1,\bar{a}_2,\bar{a}_3\}=\phi$, $R_1\cap\{\bar{a}_1,\bar{a}_2,\bar{a}_3\}=\{\bar{a}_1\}$, $R_2\cap\{\bar{a}_1,\bar{a}_2,\bar{a}_3\}=\{\bar{a}_2\}$, $R_{13}\cap\{\bar{a}_1,\bar{a}_2,\bar{a}_3\}=\{\bar{a}_1,\bar{a}_3\}$, and $R_{123}\cap\{\bar{a}_1,\bar{a}_2,\bar{a}_3\}=\{\bar{a}_1,\bar{a}_2,\bar{a}_3\}$.
\end{enumerate}
\end{definition}
\begin{definition}
A $U_k$-graph property $P$ is called \textbf{disjunctive} if there exist two disjoint sets $S^+,S^-\subseteq V^k$ and two pairwise distinct tuples $\bar{a}_1,\bar{a}_2\in V^k\backslash(S^+\cup S^-)$ such that:
\begin{enumerate}
\item For every relation $R\in P[S^+,S^-]$, $\bar{a}_1\in R$ or $\bar{a}_2\in R$.
\item There exist relations $R_1,R_2\in P[S^+,S^-]$ with $R_1\cap\{\bar{a}_1,\bar{a}_2\}=\{\bar{a}_1\}$ and $R_2\cap\{\bar{a}_1,\bar{a}_2\}=\{\bar{a}_2\}$.
\end{enumerate}
\end{definition}

Before using the metaproperties defined to prove the required results, it is important to justify the choice of definition by verifying if they are satisfied by the properties used to complete the proof earlier. For example, to see that simplicial transitivity is contagious and implicative, we can choose $S^+=(a_1,...,a_j,b,...,a_k)_{\delta}^{+j,+(j+1)},$ $S^-=\phi$, and\\ $S^+=(a_1,...,a_j,b,...,a_k)_{\delta}^{+j,+(j+1)}\backslash\{(a_2,...,a_j,b,...,a_k)\},\ S^-=\phi$ respectively. In a similar spirit, to see that connectedness is disjunctive, choose $S^+=\{\bar{a}^\tau\mid\tau\neq\tau_1,\tau_2\}$ for some distinct permutations $\tau_1,\tau_2$ and $S^-=\phi$.

\subsection{Impossibility results}
For a tuple $\bar{a}$, consider the set $\mathcal{W}_{\bar{a}}$ such that $\bar{a}\in F(\textbf{R})\leftrightarrow N_{\bar{a}}^{\textbf{R}}\in\mathcal{W}_{\bar{a}}$. Thus, $\mathcal{W}_{\bar{a}}$ is the collection of winning coalitions for the tuple $\bar{a}$. An important property of elections is symmetry with respect to candidates. This would require that a coalition that could ensure the inclusion of one tuple is also able to ensure inclusion of all the other tuples or $\mathcal{W}_{\bar{a}}=\mathcal{W}$ for all tuples $\bar{a}$ and some collection of coalitions $\mathcal{W}$. We will only consider neutrality over the tuples with all distinct elements.

Now we are ready to prove $k$-ary analogues of some of the important results of \cite{graph}.
\begin{lemma}(Neutrality lemma)(cf. \cite[Lemma~12]{graph})
For $|V|\geq k+1$, any unanimous, grounded, and IIE aggregation rule that is collectively rational with respect to a contagious property must be neutral as defined above.
\end{lemma}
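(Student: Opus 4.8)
The plan is to show that the winning-coalition family $\mathcal{W}_{\bar{a}}$ does not depend on the distinct-element tuple $\bar{a}$. First I would record that \textbf{IIE} is exactly what makes $\mathcal{W}_{\bar a}$ well defined: whether $\bar a$ is collectively accepted depends only on the coalition $N_{\bar a}^{\mathbf R}$ supporting it, so $\bar a\in F(\mathbf R)\Leftrightarrow N_{\bar a}^{\mathbf R}\in\mathcal{W}_{\bar a}$ is a genuine property of coalitions. Neutrality is then the assertion that $\mathcal{W}_{\bar a}=\mathcal{W}_{\bar b}$ for all distinct-element tuples $\bar a,\bar b$.

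The core is a local transfer step: if $P$ is $\bar a/\bar b$ contagious with witnessing sets $S^+,S^-$, then $\mathcal{W}_{\bar a}\subseteq\mathcal{W}_{\bar b}$. Given $C\in\mathcal{W}_{\bar a}$, I would first note that condition~(2) of contagiousness forces $\bar a,\bar b\notin S^+\cup S^-$ (otherwise the witnesses $R_0,R_1$ could not omit $\bar b$ respectively contain $\bar a$), so the membership of $\bar a$ and $\bar b$ can be set freely. Build the profile assigning $R_1$ (which contains $\bar a$, hence $\bar b$ by condition~(1)) to every voter in $C$, and $R_0$ (which omits $\bar b$, hence omits $\bar a$ by the contrapositive of~(1)) to every voter outside $C$; both lie in $P[S^+,S^-]$. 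Then $N_{\bar a}^{\mathbf R}=N_{\bar b}^{\mathbf R}=C$, while every relation in the profile satisfies $P$, contains $S^+$, and avoids $S^-$. Unanimity forces $S^+\subseteq F(\mathbf R)$, groundedness forces $F(\mathbf R)\cap S^-=\emptyset$, and collective rationality gives $F(\mathbf R)\in P$; hence $F(\mathbf R)\in P[S^+,S^-]$. Since $C\in\mathcal{W}_{\bar a}$ yields $\bar a\in F(\mathbf R)$, condition~(1) gives $\bar b\in F(\mathbf R)$, i.e.\ $C\in\mathcal{W}_{\bar b}$.

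To globalize, I would form the reachability graph on distinct-element $k$-tuples whose edges $\bar a\to\bar b$ are the $\bar a/\bar b$ contagious pairs guaranteed by the \textbf{contagious} hypothesis; the local step turns each edge into an inclusion $\mathcal{W}_{\bar a}\subseteq\mathcal{W}_{\bar b}$. The two clauses of the definition supply, respectively, the simplicial insertion moves (case~1) and the single-coordinate substitution moves $\bar a\mapsto(\bar a^{-j},b)$ (case~2). Each such move is reversible, its inverse being a move of the same shape using the displaced vertex as the fresh element, so the graph is symmetric and it suffices to prove connectivity. Connectivity is exactly the combinatorics already carried out in Proposition~\ref{prop2}: since $|V|\ge k+1$ a spare vertex is always available, so one may realise any permutation and any replacement of the underlying $k$-element set by routing substitutions through the spare vertex while keeping every intermediate tuple injective. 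Strong connectivity then collapses all the inclusions to equalities $\mathcal{W}_{\bar a}=\mathcal{W}_{\bar b}$, which is neutrality.

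The main obstacle I expect is this last connectivity argument in case~1: the simplicial moves do not alter one coordinate at a time but insert a fresh vertex and delete another, mixing substitution with a cyclic shift, so one must check that composing them still generates every permutation and every set-substitution (with a fresh vertex present at each step, guaranteed by $|V|\ge k+1$) while never producing a tuple with repeated entries. Establishing reversibility of these mixed moves, which is what upgrades the one-directional inclusions to equalities, is the delicate point; once the graph is seen to be symmetric and connected, the remainder is bookkeeping.
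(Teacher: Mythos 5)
Your proposal is correct and follows essentially the same route as the paper: the identical local transfer step (profile with $R_1$ on $C$ and $R_0$ elsewhere, then unanimity, groundedness, collective rationality, and IIE give $\mathcal{W}_{\bar a}\subseteq\mathcal{W}_{\bar b}$), followed by chaining contagious moves through a spare vertex, exactly as the paper does by invoking the combinatorics of Proposition~\ref{prop2}. The only difference is that your worry about reversibility of the insertion moves is unnecessary: since the chaining construction applies to every \emph{ordered} pair of distinct-element tuples, one obtains $\mathcal{W}_{\bar a}\subseteq\mathcal{W}_{\bar b}$ and $\mathcal{W}_{\bar b}\subseteq\mathcal{W}_{\bar a}$ separately (i.e.\ strong connectivity of the directed reachability graph is free), so no symmetry of individual moves is needed.
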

\begin{proof}
Consider first a property $P$ that is $\bar{a}/\bar{b}$ contagious for $\bar{a},\bar{b}\in V^k$. Consider an aggregation rule $F$ that is unanimous, grounded, IIE, and collectively rational with respect to $P$. We will show that $\mathcal{W}_{\bar{a}}\subseteq\mathcal{W}_{\bar{b}}$. Let $C$ be a coalition in $\mathcal{W}_{\bar{a}}$ and $S^{+},S^{-}\subseteq V^k$, $R_0,R_1\in P[S^+,S^-]$ be appropriate constructions from the $\bar{a}/\bar{b}$ contagiousness of $P$. Consider a profile $\textbf{R}$ where individuals in $C$ propose $R_1$ and others propose $R_0$. Since $C$ was a winning coalition for tuple $\bar{a}$, $\bar{a}\in F(\textbf{R})$. Also, $S^+\subseteq F(\textbf{R})$ and $S^-\cap F(\textbf{R})=\phi$ by unanimity and groundedness respectively. By collective rationality, $F(\textbf{R})\in P[S^+,S^-]$ and thus $\bar{b}\in F(\textbf{R})$. Since only voters in $C$ supported $\bar{b}$, $C\in\mathcal{W}_{\bar{b}}$.\\ \\
We know that all $\mathcal{W}_{\bar{a}}$'s are nonempty (since $\mathcal{N}\in\mathcal{W}_{\bar{a}}$ for each $\bar{a}$). Consider arbitrary pairwise distinct $\bar{a},\bar{b}\in V^k$. We will prove that $\mathcal{W}_{\bar{a}}\subseteq\mathcal{W}_{\bar{b}}$, which is sufficient to prove the claim. Notice the similarity to Proposition \ref{prop2}. Consider $C\in\mathcal{W}_{\bar{a}}$.
\begin{itemize}
\item If $P$ is contagious by condition $1$, we can use\\ $\bar{a}/[a_2,...,a_j,b_1,...,a_k]$-contagiousness to get $C\in\mathcal{W}_{[a_2,...,a_j,b_1,...,a_k]}$, followed by $[a_2,...,a_j,b_1,...,a_k]/[a_3,...,a_j,b_1,b_2,...,a_k]$-contagiousness\\for $C\in\mathcal{W}_{[a_3,...,a_j,b_1,b_2,...,a_k]}$, and so on till you get $C\in\mathcal{W}_{[b_1,...,b_j,a_{j+1},...,a_k]}$. Following that, apply contagiousness in a similar fashion but choose components of $\bar{b}$ in reverse ($b_k$ followed by $b_{k-1}$ and so on) while replacing. After $k$ steps, we can conclude $C\in\mathcal{W}_{\bar{b}}$.
\item If $P$ is contagious by condition $2$, we can use $\bar{a}/[b_1,a_2,...,a_k]$-contagiousness to get $C\in\mathcal{W}_{\bar{a}^{-1},b}$. Similarly, use the definition of contagiousness for values of $j$ increasing by $1$ till you conclude $C\in\mathcal{W}_{\bar{b}}$.
\end{itemize}
This completes the proof.
\end{proof}

Below is the filter-version of the main result.
\begin{theorem}(cf. \cite[Theorem~15]{graph})
(Oligarchy Theorem) For $|V|\geq k+1$, any unanimous, grounded, and IIE aggregation rule for $k$-ary relations that is collectively rational with respect to a contagious and implicative property must be oligarchic on pairwise distinct tuples.
\end{theorem}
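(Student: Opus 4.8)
The plan is to follow the standard template for oligarchy results from graph aggregation, adapting the argument of \cite[Theorem~15]{graph} to the $k$-ary setting. The key object is again the collection of winning coalitions, which by the Neutrality Lemma is a single family $\mathcal{W} = \mathcal{W}_{\bar{a}}$ independent of the (pairwise-distinct) tuple $\bar{a}$. The goal is to show that $\mathcal{W}$ is a filter on $\mathcal{N}$: this is exactly the structure needed to produce the oligarchy, since the oligarchic coalition will be $C^* = \bigcap_{C\in\mathcal{W}} C$, and one then checks $F(\textbf{R}) = \bigcap_{i\in C^*} R_i$ on pairwise distinct tuples. So the real work splits into three claims: (F1) $\mathcal{N}\in\mathcal{W}$; (F2) $\mathcal{W}$ is upward closed; and (F3) $\mathcal{W}$ is closed under finite intersection. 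I would prove these in order, then assemble the oligarchy from them.

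First I would dispatch (F1) and (F2). Unanimity immediately gives $\mathcal{N}\in\mathcal{W}$, since if every voter proposes a tuple $\bar{a}$ then so does the output; neutrality lets us state this uniformly. For upward closure, suppose $C\in\mathcal{W}$ and $C\subseteq C'$. Using IIE, the membership of $\bar{a}$ in the output depends only on which voters support $\bar{a}$; enlarging the supporting set from $C$ to $C'$ can only help, and a short profile-construction argument (analogous to the $D_U \Rightarrow E_U$ step, Proposition~3) formalizes this, so $C'\in\mathcal{W}$. These two steps are routine and I would keep them brief.

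The substantive step is (F3), closure under intersection, and this is where the \textbf{implicativeness} of $P$ is used --- exactly as contagiousness drove the Neutrality Lemma. Given $C_1, C_2\in\mathcal{W}$, I would take the sets $S^+,S^-$ and the three tuples $\bar{a}_1,\bar{a}_2,\bar{a}_3$ witnessing implicativeness, together with the five graphs $R_0,R_1,R_2,R_{13},R_{123}\in P[S^+,S^-]$. The plan is to build a profile $\textbf{R}$ in which the voters are partitioned according to their membership in $C_1$ and $C_2$ (the four cells $C_1\cap C_2$, $C_1\setminus C_2$, $C_2\setminus C_1$, and the complement), assigning to each cell one of the five prescribed graphs so that $N^{\textbf{R}}_{\bar{a}_1}=C_1$ and $N^{\textbf{R}}_{\bar{a}_2}=C_2$ while $S^+$ is unanimously supported and $S^-$ unanimously rejected. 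Then $C_1\in\mathcal{W}$ forces $\bar{a}_1\in F(\textbf{R})$ and $C_2\in\mathcal{W}$ forces $\bar{a}_2\in F(\textbf{R})$; groundedness and unanimity place $F(\textbf{R})$ in $P[S^+,S^-]$ via collective rationality, so condition~1 of implicativeness yields $\bar{a}_3\in F(\textbf{R})$. Reading off the support of $\bar{a}_3$, which will be arranged to equal $C_1\cap C_2$, gives $C_1\cap C_2\in\mathcal{W}$ by neutrality.

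The main obstacle is the bookkeeping in this profile construction: one must verify that the five available graphs $R_0,R_1,R_2,R_{13},R_{123}$ can be distributed over the cells so that the supports of $\bar{a}_1,\bar{a}_2,\bar{a}_3$ come out to exactly $C_1$, $C_2$, and $C_1\cap C_2$ respectively, while simultaneously respecting the $S^+/S^-$ constraints. The naming of the witness graphs by their intersection pattern with $\{\bar{a}_1,\bar{a}_2,\bar{a}_3\}$ is precisely engineered for this: voters in $C_1\cap C_2$ get $R_{123}$, voters in $C_1\setminus C_2$ get $R_{13}$, voters in $C_2\setminus C_1$ get $R_2$, and voters outside both get $R_0$ (with $R_1$ held in reserve for the genericity conditions). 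Once the table is laid out correctly the logical implications are immediate, so the difficulty is entirely combinatorial rather than conceptual. Having established that $\mathcal{W}$ is a filter, I would finish by letting $C^*=\bigcap\mathcal{W}$ and checking, using neutrality and IIE once more, that $F(\textbf{R})$ and $\bigcap_{i\in C^*}R_i$ agree on every pairwise-distinct tuple, which is the desired oligarchy.
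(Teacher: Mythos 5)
Your overall strategy matches the paper's: invoke the Neutrality Lemma to get a single family $\mathcal{W}$ of winning coalitions, then show $\mathcal{W}$ is a filter using profiles built from the witnesses of implicativeness. But two of your steps have concrete problems. First, your profile for closure under intersection is wrong: you assign $R_{13}$ to the cell $C_1\setminus C_2$, yet $R_{13}\cap\{\bar{a}_1,\bar{a}_2,\bar{a}_3\}=\{\bar{a}_1,\bar{a}_3\}$, so with your table the support of $\bar{a}_3$ is $(C_1\cap C_2)\cup(C_1\setminus C_2)=C_1$, not $C_1\cap C_2$. Reading off that support then yields only the tautology $C_1\in\mathcal{W}$, and intersection closure is not established. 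The correct assignment (the paper's) gives $R_1$ to $C_1\setminus C_2$; this is exactly what your own stated target ($N^{\textbf{R}}_{\bar{a}_1}=C_1$, $N^{\textbf{R}}_{\bar{a}_2}=C_2$, $N^{\textbf{R}}_{\bar{a}_3}=C_1\cap C_2$) requires, so your table contradicts your plan. In particular $R_1$ is not ``held in reserve''--- it does real work in this step.

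Second, and more seriously, you dismiss upward closure as routine, justifying it by ``IIE \ldots enlarging the supporting set can only help.'' That is monotonicity, and it is \emph{not} a consequence of IIE: an IIE rule may, for example, accept a tuple exactly when its support equals one particular coalition, and enlarging the support then destroys acceptance. Nor can you fall back on the transitivity-based construction of Proposition~3, since in this metaproperty setting there is no transitivity; the only available tool is again implicativeness. The paper handles upward closure with a construction fully parallel to the intersection step: for $C_1\in\mathcal{W}$ and $C_1\subseteq C_2$, voters in $C_1$ propose $R_{123}$, voters in $C_2\setminus C_1$ propose $R_{13}$, and everyone else proposes $R_1$. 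Then $\bar{a}_1$ is unanimously supported, $N^{\textbf{R}}_{\bar{a}_2}=C_1\in\mathcal{W}$ forces $\bar{a}_2\in F(\textbf{R})$, collective rationality gives $\bar{a}_3\in F(\textbf{R})$, and $N^{\textbf{R}}_{\bar{a}_3}=C_2$, whence $C_2\in\mathcal{W}$. (This is also where $R_{13}$ is actually needed.) With these two repairs your outline coincides with the paper's proof; the final assembly of the oligarchy from the filter, taking $C^*=\bigcap\mathcal{W}$, is fine as you describe it.
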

\begin{proof}
Take any property $P$ which is contagious and implicative, along with an aggregation rule $F$ that is unanimous, grounded, IIE, and collectively rational with respect to $P$. As shown in the lemma above, we can now talk about a common collection of winning coalitions $\mathcal{W}$ such that $\bar{a}\in F(\textbf{R})\leftrightarrow N_{\bar{a}}^{\textbf{R}}\in\mathcal{W}$ for any pairwise distinct tuple $\bar{a}$. We will show that $\mathcal{W}$ is a filter, equivalent to claiming that $F$ is an oligarchic rule, with the oligarchy serving as the least element in the filter.\\ \\
Clearly, by unanimity $\mathcal{N}\in\mathcal{W}$.

To show $\mathcal{W}$ is closed under intersections, consider arbitrary $C_1,C_2\in\mathcal{W}$. Consider a profile $\textbf{R}$ where individuals in $C_1\cap C_2$ propose $R_{123}$, those in $C_1\backslash C_2$ propose $R_1$, those in $C_2\backslash C_1$ propose $R_2$, and others propose $R_0$ where the $U_k$-graphs are the ones from the definition of an implicative property. Since $C_1,C_2$ are winning coalitions, $\bar{a}_1,\bar{a}_2\in F(\textbf{R})$. Also, $S^+\subseteq F(\textbf{R})$ and $S^-\cap F(\textbf{R})=\phi$ by unanimity and groundedness respectively. By collective rationality, $F(\textbf{R})\in P[S^+,S^-]$ and thus $\bar{a}_3\in F(\textbf{R})$. Since only voters in $C_1\cap C_2$ supported $\bar{a}_3$, $C_1\cap C_2\in\mathcal{W}$.\\ \\
To show $\mathcal{W}$ is closed under upper bounds, consider arbitrary $C_1\in\mathcal{W}$ and $C_1\subseteq C_2$. Consider a profile $\textbf{R}$ where individuals in $C_1$ propose $R_{123}$, those in $C_2\backslash C_1$ propose $R_{13}$, and others propose $R_1$ where the $U_k$-graphs are the ones from the definition of an implicative property. Since $C_1$ is a winning coalitions, $\bar{a}_2\in F(\textbf{R})$. Also, $\bar{a}_1\in F(\textbf{R})$, $S^+\subseteq F(\textbf{R})$, and $S^-\cap F(\textbf{R})=\phi$ by unanimity and groundedness respectively. By collective rationality, $F(\textbf{R})\in P[S^+,S^-]$ and thus $\bar{a}_3\in F(\textbf{R})$. Since only voters in $C_2$ supported $\bar{a}_3$, $C_2\in\mathcal{W}$.\\ \\
Thus, we have successfully shown that $\mathcal{W}$ is a filter under the given assumptions.
\end{proof}
\begin{theorem}(cf. \cite[Theorem~16]{graph})
(Dictatorship Theorem) For $|V|\geq k+1$, any unanimous, grounded, and IIE aggregation rule for $k$-ary relations that is collectively rational with respect to a contagious, implicative, and disjunctive property must be dictatorial on pairwise distinct tuples.
\end{theorem}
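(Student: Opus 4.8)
The plan is to build directly on the Oligarchy Theorem. Since $P$ is contagious and implicative, that result already tells us that the common collection of winning coalitions $\mathcal{W}$ (which exists by the Neutrality Lemma) is a filter on $\mathcal{N}$. A dictatorship is exactly the situation in which $\mathcal{W}$ is the principal ultrafilter generated by a single voter, so it suffices to promote this filter to an ultrafilter and then invoke the fact that every ultrafilter on a finite set is principal. The additional disjunctive hypothesis is precisely the ingredient that forces primeness: I will show that for every coalition $C\subseteq\mathcal{N}$, either $C\in\mathcal{W}$ or $\mathcal{N}\backslash C\in\mathcal{W}$.

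For the disjunctive step, let $S^+,S^-\subseteq V^k$ and the pairwise distinct tuples $\bar{a}_1,\bar{a}_2$ be the data witnessing that $P$ is disjunctive, together with the relations $R_1,R_2\in P[S^+,S^-]$ satisfying $R_1\cap\{\bar{a}_1,\bar{a}_2\}=\{\bar{a}_1\}$ and $R_2\cap\{\bar{a}_1,\bar{a}_2\}=\{\bar{a}_2\}$; as in the connectedness example, $\bar{a}_1$ and $\bar{a}_2$ may be taken to have pairwise distinct components, so that the common $\mathcal{W}$ governs both. Given an arbitrary coalition $C$, I would form the profile $\textbf{R}$ in which every voter in $C$ proposes $R_1$ and every voter in $\mathcal{N}\backslash C$ proposes $R_2$. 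Unanimity and groundedness give $S^+\subseteq F(\textbf{R})$ and $S^-\cap F(\textbf{R})=\phi$, so collective rationality yields $F(\textbf{R})\in P[S^+,S^-]$, and the disjunctive property forces $\bar{a}_1\in F(\textbf{R})$ or $\bar{a}_2\in F(\textbf{R})$. Since $N_{\bar{a}_1}^{\textbf{R}}=C$ and $N_{\bar{a}_2}^{\textbf{R}}=\mathcal{N}\backslash C$, the first case gives $C\in\mathcal{W}$ and the second gives $\mathcal{N}\backslash C\in\mathcal{W}$, as required.

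Together with the filter structure this makes $\mathcal{W}$ an ultrafilter: it is proper because groundedness forbids the empty coalition from being winning (with no voter supporting $\bar{a}$ we get $\bar{a}\notin F(\textbf{R})$), and it cannot contain both $C$ and its complement, since closure under intersection would then place $\phi\in\mathcal{W}$. Finally, for finite $\mathcal{N}$ every ultrafilter on $\mathcal{N}$ is principal, so there is a voter $i^*$ with $C\in\mathcal{W}\Leftrightarrow i^*\in C$. Reading this back through the definition of $\mathcal{W}$, for every pairwise distinct tuple $\bar{a}$ we obtain $\bar{a}\in F(\textbf{R})\Leftrightarrow N_{\bar{a}}^{\textbf{R}}\in\mathcal{W}\Leftrightarrow i^*\in N_{\bar{a}}^{\textbf{R}}\Leftrightarrow\bar{a}\in R_{i^*}$, so $F$ coincides with the dictatorship $F_{i^*}$ on all pairwise distinct tuples. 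The main obstacle I anticipate is bookkeeping around neutrality rather than a deep difficulty: one must verify that the disjunctive witnesses $\bar{a}_1,\bar{a}_2$ genuinely have distinct components so that a single common $\mathcal{W}$ applies to both, and one must be explicit that passing from an ultrafilter to an actual dictator requires $\mathcal{N}$ to be finite, since for infinite $\mathcal{N}$ the argument delivers only a possibly non-principal ultrafilter.
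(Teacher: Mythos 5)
Your proposal is correct and follows essentially the same route as the paper's own proof: the disjunctive witnesses $R_1,R_2$ are assigned to $C$ and $\mathcal{N}\backslash C$ respectively, and unanimity, groundedness, and collective rationality force $\bar{a}_1\in F(\textbf{R})$ or $\bar{a}_2\in F(\textbf{R})$, whence $C\in\mathcal{W}$ or $\mathcal{N}\backslash C\in\mathcal{W}$, upgrading the filter from the Oligarchy Theorem to an ultrafilter. Your additional bookkeeping (properness of $\mathcal{W}$, finiteness of $\mathcal{N}$ for principality, and the requirement that the disjunctive witnesses have pairwise distinct components so the common $\mathcal{W}$ applies) makes explicit steps that the paper leaves implicit, but does not change the argument.
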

\begin{proof}
Take any property $P$ which is contagious, implicative, and disjunctive, along with an aggregation rule $F$ that is unanimous, grounded, IIE, and collectively rational with respect to $P$. As shown in the above theorem, we can now talk about a common collection of winning coalitions $\mathcal{W}$ such that $\bar{a}\in F(\textbf{R})\leftrightarrow N_{\bar{a}}^{\textbf{R}}\in\mathcal{W}$ for any pairwise distinct $\bar{a}$ which is a filter. We will now show that it is an ultrafilter, equivalent to claiming that $F$ is a dictatorial rule, with the dictator serving as the least element in the ultrafilter.\\ \\
Consider arbitrary coalition $C$. Consider a profile $\textbf{R}$ where individuals in $C$ propose $R_{1}$, and others propose $R_2$ where the $U_k$-graphs are the ones from the definition of a disjunctive property. It follows  that $S^+\subseteq F(\textbf{R})$ and $S^-\cap F(\textbf{R})=\phi$ by unanimity and groundedness respectively. By collective rationality, $F(\textbf{R})\in P[S^+,S^-]$ and thus $\bar{a}_1\in F(\textbf{R})$ or $\bar{a}_2\in F(\textbf{R})$. Since only voters in $C$ supported $\bar{a}_1$ and only those in $\mathcal{N}\backslash C$ supported $\bar{a}_2$, $C\in\mathcal{W}$ or $\mathcal{N}\backslash C\in\mathcal{W}$. Thus, $\mathcal{W}$ is an ultrafilter.
\end{proof}
Thus, we have successfully extended the idea to $k$-ary relations.

\section{Discussion and future work}
\subsection{Some immediate consequences}
We have already seen that for social choice situations where our language carries a single relation symbol, the collection of winning coalitions forms an ultrafilter. Now, consider a language with many relation symbols $\{R^i\}_{i\in I}$. For each $R^i$, an argument similar to above would state that its interpretation is decided by an ultrafilter. Thus, the coalitions that decide all relations of the aggregate form an intersection of ultrafilters which is a filter.
\subsection{Future work}
\subsubsection{Model-theoretic structures}
In model theory, one often thinks of a filtered product of a collection of first-order structures, for a fixed language, as their average/aggregate. So far we have only dealt with a special class of relational structures. It will be very interesting to see whether it is possible to find some sufficient conditions that the interpretations of function symbols satisfy in order to force the aggregation to be a filtered product of those structures.

\subsubsection{Simplicial complexes}
A lot of applications model relationships as simplicial complexes with bounded dimension. Apart from aggregating social relationships, simplicial complexes are also useful in diverse areas including rendering $3D$-graphics. Such aggregation problems could arise naturally in decentralized computing setups when each unit produces a simplicial complex built on a predetermined grid of points.

%
%
%
%

\end{document}